\documentclass[10pt,preprint]{amsart}
\usepackage{amssymb,amsfonts,amsthm,amsmath,amscd,relsize}
\usepackage{hyperref}
\usepackage{enumerate}
\usepackage{comment}

 \usepackage{epsfig}
 \usepackage{graphicx}
 \usepackage{lipsum}
\usepackage{xcolor}
\usepackage{float}
\usepackage{soul}
\usepackage{adjustbox}

\usepackage{enumitem}

\usepackage{pgf,tikz,pgfplots}
\usepackage{xcolor}
\usetikzlibrary{calc}
\pgfplotsset{compat=1.13}
\usepackage{mathrsfs}
\usetikzlibrary{arrows}

\def\N{\mathbb N}
\def\Z{\mathbb Z}

\def\centerarc[#1](#2)(#3:#4:#5) { \draw[#1] ($(#2)+({#5*cos(#3)},{#5*sin(#3)})$) arc (#3:#4:#5); }

\theoremstyle{plain}
\newtheorem{theorem}{Theorem}

\newtheorem{corollary}{Corollary}

\theoremstyle{definition}

\theoremstyle{remark}

\begin{document}

\title{The sad life of lattice triangles}

\author{Christian Aebi and Grant Cairns}

\address{Coll\`ege Calvin, Geneva, Switzerland 1211}
\email{christian.aebi@edu.ge.ch}
\address{Department of Mathematical and Physical Sciences, La Trobe University, Melbourne, Australia 3086}
\email{G.Cairns@latrobe.edu.au}

\begin{abstract}
This paper treats triangles in the plane whose vertices lie on the integer lattice, i.e., the vertices have integer coordinates.  
It shows that apart from trivial examples, the circumcenter, centroid and orthocenter of such triangles never all lie on the integer lattice.
Several further observations are made concerning the circumcenter, centroid and orthocenter.
 \end{abstract}


\maketitle

A \emph{lattice triangle} $T$ is  a triangle in the plane whose vertices lie on the integer lattice, in other words, they have integer coordinates. Arguably, the most important points associated to any triangle are  its  \emph{circumcenter} $F$, i.e., the intersection of the perpendicular bisectors of the sides,  its \emph{centroid} $G$, i.e., the intersection of the medians, and its \emph{orthocenter} $H$, i.e., the intersection of the altitudes. The reader will recall that the three points $F,G,H$ lie on the famous \emph{Euler line} \cite[p.~71]{AN}. But it is a sad reality of the geometry of the integer lattice that these important points often fail to be lattice points themselves. In fact, the main point of this note is to observe that it \emph{never} happens that the three points $F,G,H$ are all lattice points, except in contrived circumstances.

 The centroid $G$ of our lattice triangle $T$ with vertices $v_1,v_2,v_3$ is $\frac13(v_1+v_2+v_3)$, where we are identifying a point with its position vector. So it is easy to see for a given triangle whether the centroid is a lattice point; this only happens  when  $v_1+v_2+v_3\equiv (0,0) \pmod 3$.
It is also well known that 
if the circumcenter $F$ of $T$ is a lattice point, then the orthocenter $H$ is also a lattice point. Indeed, since $3G=v_1+v_2+v_3$ is a lattice point and 
$H=3G-2F$ (see \cite[p.~71]{AN}),  $H$ is a lattice point when $F$ is a lattice point.
Nevertheless, it can happen that $H$ is a lattice point even when $F$ is not a lattice point. Figure~\ref{F:euler} shows an example of a right triangle for which $H$ is a lattice point but $F$ and  $G$ are not.

\begin{figure}[h]
\begin{center}
\begin{tikzpicture}[x=1.4cm,y=1.4cm]

\draw [fill, blue!20] (0,0) -- (2,0)-- (2,3) -- cycle ;
\draw [blue,semithick] (0,0) -- (2,0)-- (2,3) -- cycle ;

\draw[fill, black] (0,0) circle (.04);
\draw[fill, black] (2,0) circle (.04);
\draw[fill, black] (2,3) circle (.04);
\draw[fill, black] (1,3/2) circle (.04);
\draw[fill, black] (4/3,1) circle (.04);

\draw [black,font=\small]  (-.2,-.2) node {$O$} ;
\draw [black,font=\small]  (2.2,-.3) node {$v_1=H$} ;
\draw [black,font=\small]  (2.2,3) node {$v_2$} ;
\draw [black,font=\small]  (1.2,.8) node {$G$} ;
\draw [black,font=\small]  (.8,3/2) node {$F$} ;

\draw[-,blue,thick] (8/3,-1) -- (-2/3,4);

\draw[black,semithick] (1,3/2) circle (1.803);

\foreach \ii in {-1,...,3}
\draw[-,dotted,semithick] (\ii,-1) -- (\ii,4);
\foreach \jj in {-1,...,4}
\draw[-,dotted,semithick] (-1,\jj) -- (3,\jj);

\draw[->,semithick] (-1,0) -- (3,0);
\draw[->,semithick] (0,-1) -- (0,4);

  \end{tikzpicture}
\caption{A lattice triangle with Euler line $FGH$, where $H$ is a lattice
point but $F$ and $G$ are not.}\label{F:euler}
\end{center}
\end{figure}
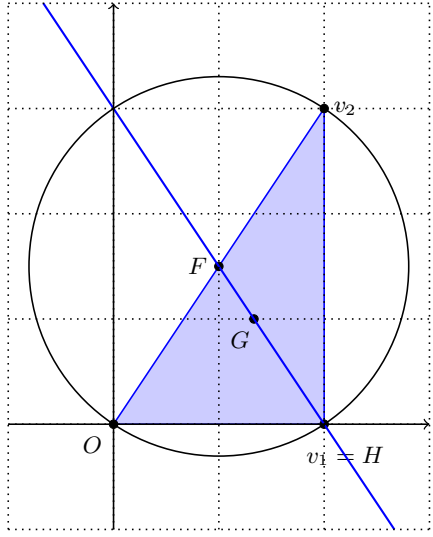
 
Before proceeding, we will need the formula for the circumcenter, which is conveniently expressed in complex numbers. Hence, we identify points in $\mathbb R^2$ with complex numbers. Nevertheless, in order to employ the vector cross product, we also interpret vectors in $\mathbb R^2$ as vectors in $\mathbb R^3$ with a $z$ coordinate of $0$.  It is convenient to be able to freely move among these interpretations of vectors. For example, for vectors $v, w$, we might consider both the cross product $v\times w$, and the complex multiplication $vw$.

By translation, we may suppose that $v_3$ is the origin $O$. 
We assume that our lattice triangle $T$ is non-degenerate, so the cross product $v_1\times v_2$ is nonzero. 
Let us write $v_1,v_2$ as complex numbers: $v_1 = x_1 + iy_1$, and $v_2 = x_2 + iy_2$, where $x_1,y_1,x_2,y_2 \in\Z$.
Observe that $v_1\overline{v_2}- \overline{v_1} v_2=2i(x_2 y_1 - x_1 y_2) $ and hence, taking norms,
\[
|v_1\overline{v_2}- \overline{v_1} v_2|=2|x_1 y_2-x_2 y_1 | = 2|v_1\times v_2|.
\]
Thus $v_1\overline{v_2}- \overline{v_1} v_2$ is nonzero and in particular, we may divide by $v_1\overline{v_2}- \overline{v_1} v_2$. 
We claim that the following formula gives the circumcenter of $T$.
\begin{equation}\label{E:Omega}
F:=\frac{v_1v_2(\overline{v_2}-\overline{v_1})}{v_1\overline{v_2}-\overline{v_1} v_2}.
\end{equation}
Indeed, the circumcenter is the center of a circle that
circumscribes the triangle, and thus, the distances between the vertices and
the circumcenter are radii, and hence all equal. Moreover, the circumcenter is the unique point with this property. So it suffices to show the point $F$ defined in \eqref{E:Omega} is equidistant from the vertices $O,v_1,v_2$ of $T$. 
The vector from $F$ to $v_1$ is
\[
\frac{v_1v_2(\overline{v_2}-\overline{v_1})}{v_1\overline{v_2}-\overline{v_1} v_2}-v_1=\frac{v_1\overline{v_2}(v_2-  v_1)}{v_1\overline{v_2}-\overline{v_1} v_2},
\]
which has the same length as $F$. Similarly, the vector from $F$ to $v_2$ also has the same length as $F$. So $F$ is the circumcenter of $T$, as claimed. 

As we saw above, the denominator in the expression for $F$ in \eqref{E:Omega} is
\begin{equation}\label{E:Omegadenom}
v_1\overline{v_2}-\overline{v_1} v_2=2i(x_2 y_1 - x_1 y_2).
\end{equation}
Expressed in Cartesian coordinates, the numerator for $F$ in \eqref{E:Omega} is
\begin{equation}\label{E:Omeganum}
v_1v_2(\overline{v_2}-\overline{v_1})=(x_2^2+y_2^2)(x_1,y_1) - (x_1^2+y_1^2)(x_2,y_2).
\end{equation}

From \eqref{E:Omegadenom} and \eqref{E:Omeganum}, the coordinates of $F$ are rationals. Since $G=\frac13(v_1+v_2)$, the coordinates of $G$ are also rationals. So we can simply multiply $v_1,v_2$ by an appropriate positive integer to obtain a lattice triangle for which the circumcenter $F$ and centroid $G$ are lattice points. Notice that since $H=3G-2F$, the orthocenter $H$ is then also a lattice point. For example, in Figure~\ref{F:euler}, the circumcenter $F$ is $(1,\frac32)$ and the centroid $G$ is $(\frac43,1)$. Dilating by a factor of $6$ gives a triangle whose circumcenter, centroid and orthocenter are lattice points; see Figure~\ref{F:euler2}. But this is cheating! The natural question is: is every such triangle constructed in this manner? In other words, is there a lattice triangle, with vertices $O,(x_1,y_1),(x_2,y_2)$, for which the circumcenter and centroid are lattice points, and $\gcd(x_1,y_1,x_2,y_2)=1$? It turns out, there is not.

\begin{figure}[h]
\begin{center}
\begin{tikzpicture}[x=1.8cm,y=1.8cm]

\draw [fill, blue!20] (0,0) -- (2,0)-- (2,3) -- cycle ;
\draw [blue,semithick] (0,0) -- (2,0)-- (2,3) -- cycle ;

\draw[fill, black] (0,0) circle (.04);
\draw[fill, black] (2,0) circle (.04);
\draw[fill, black] (2,3) circle (.04);
\draw[fill, black] (1,3/2) circle (.04);
\draw[fill, black] (4/3,1) circle (.04);

\draw [black,font=\small]  (-.2,-.2) node {$O$} ;
\draw [black,font=\small]  (2.7,-.2) node {$v_1=H=(12,0)$} ;
\draw [black,font=\small]  (2.8,3) node {$v_2=(12,18)$} ;
\draw [black,font=\small]  (2,1.) node {$G=(8,6)$} ;
\draw [black,font=\small]  (1.65,1.5) node {$F=(6,9)$} ;

\draw[-,blue,thick] (7/3,-.5) -- (-1/3,7/2);


\draw[black,semithick] (1,3/2) circle (1.803);

\foreach \ii in {-6,...,18}
\draw[-,dotted,semithick] (\ii/6,-.5) -- (\ii/6,3.5);
\foreach \jj in {-3,...,21}
\draw[-,dotted,semithick] (-1,\jj/6) -- (3,\jj/6);

\draw[->,semithick] (-1,0) -- (3,0);
\draw[->,semithick] (0,-.5) -- (0,3.5);

  \end{tikzpicture}
\caption{A lattice triangle for which $F,G, H$ are lattice points.}\label{F:euler2}
\end{center}
\end{figure}
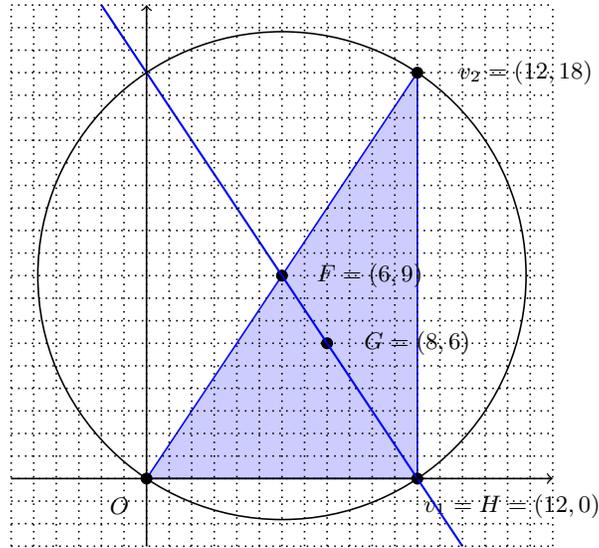

\begin{theorem}
If for the lattice triangle with vertices $O,(x_1,y_1),(x_2,y_2)$, the circumcenter  and the centroid are both lattice points, then $\gcd(x_1,y_1,x_2,y_2)$ is a multiple of $3$.
\end{theorem}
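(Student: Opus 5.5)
The plan is to avoid the explicit formula \eqref{E:Omega} and use only two facts: the circumcenter is equidistant from the vertices, and the centroid condition is a congruence mod $3$. Throughout, $v_3=O$.

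First I translate the hypothesis on $G$. Since $G=\frac13(v_1+v_2)$ is a lattice point, we get $x_1+x_2\equiv 0$ and $y_1+y_2\equiv 0 \pmod 3$. In other words,
\[
(x_2,y_2)\equiv -(x_1,y_1)\pmod 3 .
\]

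Next I translate the hypothesis on $F$. Write $F=(a,b)$ with $a,b\in\Z$. The condition $|F|=|F-v_j|$ expands to $2F\cdot v_j=|v_j|^2$ for $j=1,2$, that is,
\[
2(ax_1+by_1)=x_1^2+y_1^2,\qquad 2(ax_2+by_2)=x_2^2+y_2^2 .
\]
These are identities in $\Z$, so I may reduce them mod $3$. Substituting $x_2\equiv -x_1$ and $y_2\equiv -y_1$ into the second identity gives
\[
-2(ax_1+by_1)\equiv x_1^2+y_1^2 \pmod 3 .
\]
Adding this to the first identity yields $2(x_1^2+y_1^2)\equiv 0$, hence $x_1^2+y_1^2\equiv 0 \pmod 3$.

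The key arithmetic step is the following. Squares mod $3$ are $0$ or $1$, so $x_1^2+y_1^2\equiv 0\pmod 3$ forces $x_1\equiv y_1\equiv 0\pmod 3$. Equivalently, $-1$ is not a square mod $3$.

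Then, by the centroid congruence, $x_2\equiv -x_1\equiv 0$ and $y_2\equiv -y_1\equiv 0\pmod 3$. Hence $3$ divides all four coordinates, so $3$ divides $\gcd(x_1,y_1,x_2,y_2)$.

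I do not expect a serious obstacle. The only point needing care is to work with the integer identities coming from equidistance rather than with the rational formula \eqref{E:Omega}. That formula has the factor $2(x_2y_1-x_1y_2)$ in its denominator, which is awkward to handle mod $3$, whereas the equidistance identities can be reduced mod $3$ directly.
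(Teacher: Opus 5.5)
Your proof is correct and follows the same strategy as the paper: use $v_2\equiv -v_1 \pmod 3$ from the centroid condition, turn integrality of $F$ into a mod-$3$ congruence, and finish with the fact that a sum of two squares divisible by $3$ forces both squares to be divisible by $3$. The only difference is how you get the congruence. You reduce the integer equidistance identities $2F\cdot v_j=|v_j|^2$, whereas the paper clears the denominator in \eqref{E:Omega}: it observes that the denominator is $\equiv 0 \pmod 3$, so the numerator must be as well. Your route yields $x_1^2+y_1^2\equiv 0\pmod 3$ directly instead of the paper's $(x_1^2+y_1^2)(x_1,y_1)\equiv 0\pmod 3$, which saves a small case distinction.
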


\begin{proof}
Naturally enough, the idea is  to work simply modulo $3$. If the centroid $G$ is lattice point, then 
\begin{equation}\label{E:xxyy}
x_1\equiv -x_2\pmod3\qquad\text{and}\qquad y_1\equiv -y_2\pmod3.
\end{equation}
Now looking at the formula for the circumcenter $F$,  and using the above relations,  we see the denominator of $F$ is $2(x_1 y_2-y_1 x_2) \equiv 0\pmod3$. As $F$ is a lattice point, the numerator of $F$ must then also be divisible by $3$.
From \eqref{E:Omeganum} and \eqref{E:xxyy}, the numerator, when evaluated modulo $3$,
 gives
\[
(x_2^2+y_2^2)(x_1,y_1) - (x_1^2+y_1^2)(x_2,y_2)\equiv 2(x_1^2+y_1^2)(x_1,y_1)\pmod3.
\]
Since squares are congruent to either $0$ or $1$ modulo $3$ according to whether or not the number is a multiple of $3$, it follows that if the sum of two squares is divisible by $3$ then both terms are multiples of $3$. So the above congruence implies necessarily that $x_1$ and $y_1$ are both divisible by $3$, and by  \eqref{E:xxyy} the same is true for $x_2$ and $y_2$, i.e., $\gcd(x_1,y_1,x_2,y_2)$ is a multiple of $3$.
\end{proof}

If the circumcenter is a lattice point but the centroid isn't, the conclusion to Theorem 1 may not hold. Nevertheless, in this case, a different property is enjoyed by the coordinates  $x_1,y_1,x_2,y_2$. 

\begin{theorem}
Consider a lattice triangle $T$ with vertices $O,v_1=(x_1,y_1),v_2=(x_2,y_2)$. 
If the circumcenter $F$ of  $T$ is a lattice point, then $x_1+y_1$ and  $x_2+y_2$ are both even.
\end{theorem}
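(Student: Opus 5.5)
The cleanest route avoids the circumcenter formula \eqref{E:Omega} and uses only the defining property of $F$. Since $F$ is equidistant from $O$ and $v_1$, we have $|F|^2=|v_1-F|^2$, which expands to
\[
|v_1|^2 = 2\,F\cdot v_1 ,
\]
where $\cdot$ is the ordinary dot product. If $F=(a,b)$ is a lattice point, then $2F\cdot v_1=2(ax_1+by_1)$ is even. Hence $x_1^2+y_1^2$ is even.

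Since $n^2\equiv n \pmod 2$ for every integer $n$, we get
\[
x_1+y_1\equiv x_1^2+y_1^2\equiv 0\pmod 2 .
\]
So $x_1+y_1$ is even. The identical argument with $v_2$ in place of $v_1$, using $|F|^2=|v_2-F|^2$, shows that $x_2+y_2$ is even.

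Nothing here is hard. The only point needing care is to use the perpendicular-bisector condition $|v_j|^2=2F\cdot v_j$ rather than the explicit formula \eqref{E:Omega}, since working modulo $2$ with the denominator $2i(x_2y_1-x_1y_2)$ would be awkward.
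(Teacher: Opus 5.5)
Your proof is correct, and it takes a genuinely different and much shorter route than the paper.

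The paper works with the explicit formula \eqref{E:Omega}, written as $2i(x_2y_1-x_1y_2)F=v_1v_2(\overline{v_2}-\overline{v_1})$, viewed as an identity in the Gaussian integers. It proceeds in three steps:
\begin{enumerate}
\item It uses relabeling and a lattice translation to reduce to the case where $x_1+y_1$ is even and $x_2+y_2$ is odd.
\item It strips the largest power of $2$ from $v_1$.
\item It uses unique factorization in $\Z[i]$, together with the observation that divisibility of $x+iy$ by $1+i$ (or $1-i$) forces $x+y$ to be even. This shows the prime $1+i$ must be absorbed twice by $v'_1$, contradicting the normalization.
\end{enumerate}

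You instead use only the equidistance property $|F|^2=|v_j-F|^2$, which gives $x_j^2+y_j^2=2F\cdot v_j$, and finish with $n^2\equiv n \pmod 2$. This needs no case reduction, no normalization, no explicit circumcenter formula, and no arithmetic in $\Z[i]$, and each vertex is handled independently. The same identity, applied with any vertex placed at the origin, also shows directly that every side vector of $T$ has even squared length, i.e.\ even coordinate sum.

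The paper's approach mainly buys a showcase of the prime $1+i$ lying over $2$ in the Gaussian integers. As a proof of this particular statement, yours is the more elementary and transparent argument.
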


\begin{proof} Let $\sigma:\Z^2\to\Z_2$ be the function defined by setting $\sigma(x,y)$ to be the reduction of $x+y$ modulo $2$,  for all $(x,y)\in\Z^2$. 
If $\sigma(v_1) = \sigma(v_2) = 0$, we are done. This leaves three potential cases: $(\sigma(v_1),\sigma(v_2)) = (0,1), (1,0)$ or $(1,1)$. Note that we may assume $\sigma(v_1) = 0$ and $\sigma(v_2) = 1$. 
Indeed, if $\sigma(v_1) = 0$ and $\sigma(v_2) = 1$, then leave the vertices unchanged.
If $\sigma(v_1) = 1$ and $\sigma(v_2) = 0$, then interchange the labels $v_1,v_2$, so that one obtains $\sigma(v_1) = 0$ and $\sigma(v_2) = 1$.
If $\sigma(v_1) = \sigma(v_2) = 1$, then $\sigma(v_2-v_1)=0$, since $\sigma$ is a group homomorphism. In this case, translate $T$ so that $v_2$ is moved to the origin and relabel by $v_2$ the vertex that was previously at the origin. Then $\sigma(v_1)=0$ and $\sigma(v_2)=1$. Hence, it suffices to treat the case where $\sigma(v_1)=0$ and $\sigma(v_2)=1$. We will show that this leads to a contradiction.

From \eqref{E:Omega} and \eqref{E:Omegadenom}, 
\begin{equation}\label{E:Omega2}
2i(x_2 y_1 - x_1 y_2)F=v_1v_2(\overline{v_2}-\overline{v_1}).
\end{equation}
Let $k\in\N\cup\{0\}$ be the largest power of $2$ that divides $v_1$, say $v_1=2^kv'_1$ where $v'_1=x'_1+iy'_1$ with $x'_1,y'_1\in\Z$, so $x_1=2^kx'_1,y_1=2^ky'_1$, and $x'_1,y'_1$ are not both even. From \eqref{E:Omega2}, 
\begin{equation}\label{E:Omega3}
2i(x_2 y'_1 - x'_1 y_2)F=v'_1v_2(\overline{v_2}-\overline{v_1}).
\end{equation}

This is an equation in the Gaussian integers $\Z[i]$, \index{integers!Gaussian} 
since $F\in \Z^2$ by hypothesis. We will employ the fact that the Gaussian integers enjoy unique prime factorization; see \cite[Chapter~7]{Stillwell}.
Note that in the Gaussian integers, the prime factorization of $2$ is $2=(1+i)(1-i)$. So by  \eqref{E:Omega3}, $1+i$ is a factor of either $v'_1,v_2$ or $\overline{v_2}-\overline{v_1}$, and in the latter case, $1-i$ is a factor of  $v_2- v_1$.
But observe that if a Gaussian integer $v$ is divisible by $1+i$, say $v=(a+ib)(1+i)$, for some $a,b\in\Z$, then $v=(a-b)+i(a+b)$, so $\sigma(v)=0$. 
If $v$ is divisible by $1-i$, say $v=w(1-i)$, then $v=(-iw)(1+i)=(a+ib)(1+i)$, for some $a,b\in\Z$, and once again, $\sigma(v)=0$. Thus, since $\sigma(v_2)=\sigma(v_2-v_1)=1$ by assumption, $1+i$ must be a factor of $v'_1$. By the same reasoning, $1-i$ is also a factor of $v'_1$. But then $v'_1$ is divisible by $2$, contradicting the fact that $x'_1,y'_1$ are not both even.
\end{proof}

The area of a lattice triangle $T$ is an integer multiple of 1/2. The above theorem has the following consequence.

\begin{corollary} 
If the circumcenter $F$ of a lattice triangle $T$ is a lattice point, then the area of $T$ is an integer. 
\end{corollary}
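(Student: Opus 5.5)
We will derive the Corollary directly from Theorem~2. First translate $T$ so that one vertex is at the origin $O$. Translation by a lattice vector keeps the vertices on the lattice, keeps the circumcenter a lattice point, and does not change the area. So we may assume the vertices are $O$, $v_1=(x_1,y_1)$ and $v_2=(x_2,y_2)$ with $x_1,y_1,x_2,y_2\in\Z$. The area of $T$ is then
\[
\tfrac12|v_1\times v_2| = \tfrac12|x_1y_2-x_2y_1|,
\]
so it suffices to show that $x_1y_2-x_2y_1$ is even.

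Since $F$ is a lattice point, Theorem~2 says that $x_1+y_1$ and $x_2+y_2$ are both even. In other words, $x_1\equiv y_1 \pmod 2$ and $x_2\equiv y_2\pmod 2$. Reducing modulo $2$ then gives
\[
x_1y_2-x_2y_1\equiv x_1x_2-x_2x_1 = 0 \pmod 2.
\]
Hence $x_1y_2-x_2y_1$ is even, and the area $\tfrac12|x_1y_2-x_2y_1|$ is an integer.

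The only point that needs care is the reduction to a vertex at the origin, namely checking that the hypothesis of Theorem~2 survives the translation. It does: translating by $-v_3$, where $v_3$ is a lattice point, maps the circumcenter $F$ to $F-v_3$, which is again a lattice point. Apart from this, the argument is a one-line parity computation, so I expect no real obstacle.
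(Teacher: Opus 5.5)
Your proof is correct and takes the same route as the paper: you apply Theorem~2 to get $x_1\equiv y_1$ and $x_2\equiv y_2 \pmod 2$, deduce that $x_1y_2-x_2y_1$ is even, and hence that the area $\frac12|x_1y_2-x_2y_1|$ is an integer. Your explicit check that translating a vertex to the origin preserves the hypothesis is a detail the paper leaves implicit.
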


\begin{proof}
From the above theorem, if $T$ has vertices $O,v_1=(x_1,y_1),v_2=(x_2,y_2)$, then $x_1\equiv-y_1 \pmod 2 $ and  $x_2\equiv-y_2\pmod 2 $. The area of $T$ is 
$\frac12|v_1\times v_2|=\frac12|x_1 y_2-x_2 y_1|$. So, since $x_1 y_2-x_2 y_1\equiv 0\pmod 2 $, the corollary follows.
\end{proof}

We now give a few examples that highlight the reluctance of the three canonical centers (the centroid, the circumcenter and the orthocenter) to be lattice points. First note however that for each $n\in\N$, there is a lattice triangle of area $n$ whose circumcenter is a lattice point.
Indeed, consider the lattice triangle $T$ with vertices $O,(2,0),(n,n)$. It has area $n$ and circumcenter $(1,n-1)$.

For the lattice triangle $T$ with vertices $O,(4,2),(1,5)$,  its Euler line contains no lattice points whatsoever. Indeed, its  centroid is $\frac13(5,7)$ and its circumcircle is $\frac13(4,7)$, 
so the Euler line  is the horizontal line through $(0,\frac73)$; see Figure~\ref{F:badeuler}. 
It can happen that the circumcenter, centroid and orthocenter all fail to be lattice points even when the circumradius is an integer, the area of the triangle is an integer and the coordinates of each side vector  have even sum. Indeed,
this is the case for the lattice triangle with vertices $O,(19,17),(11,23)$, which has circumcenter $\frac15(39,52)$, centroid $(10,\frac{40}3)$, orthocenter $\frac15(72, 96)$. It has area $125$, circumradius $13$ and the coordinates of each side vector have even sum.

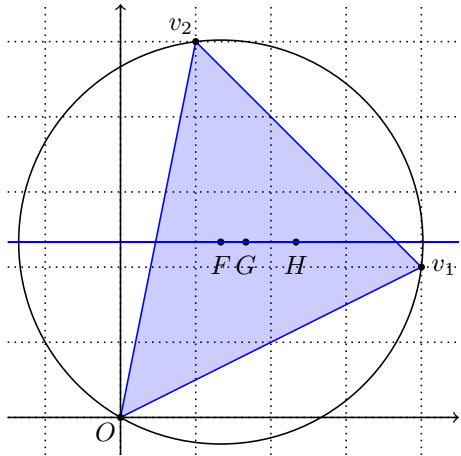
\begin{figure}
\begin{center}
\begin{tikzpicture}

\draw [fill, blue!20] (0,0) -- (4,2)-- (1,5) -- cycle ;
\draw [blue,semithick] (0,0) -- (4,2)-- (1,5) -- cycle ;

\draw[fill, black] (0,0) circle (.04);
\draw[fill, black] (4,2) circle (.04);
\draw[fill, black] (1,5) circle (.04);
\draw[fill, black] (5/3,7/3) circle (.04);
\draw[fill, black] (4/3,7/3) circle (.04);
\draw[fill, black] (7/3,7/3) circle (.04);

\draw [black]  (-.2,-.2) node {$O$} ;
\draw [black]  (4.3,2) node {$v_1$} ;
\draw [black]  (7/3,7/3-.3) node {$H$} ;
\draw [black]  (.8,5.2) node {$v_2$} ;
\draw [black]  (5/3,7/3-.3) node {$G$} ;
\draw [black]  (4/3,7/3-.3) node {$F$} ;

\draw[-,blue,thick] (-1.5,7/3) -- (4.5,7/3);

\draw[black,semithick] (4/3,7/3) circle (2.687);

\foreach \ii in {-1,...,4}
\draw[-,dotted,semithick] (\ii,-.5) -- (\ii,5.5);
\foreach \jj in {0,...,5}
\draw[-,dotted,semithick] (-1.5,\jj) -- (4.5,\jj);

\draw[->,semithick] (-1.5,0) -- (4.5,0);
\draw[->,semithick] (0,-.5) -- (0,5.5);

  \end{tikzpicture}
\caption{A lattice triangle whose Euler line contains no lattice points.}\label{F:badeuler}
\end{center}
\end{figure}

An example of a lattice triangle whose circumcenter and orthocenter are lattice points, but whose centroid is not a lattice point,
is given by the lattice triangle with vertices $O,(6,0),(8,4)$. Its centroid is $\frac{1}3(14,4)$, while its circumcenter is $(3,4)$ and its orthocenter is $(8,-4)$; see Figure~\ref{F:badcent}. Notice that in this example, the circumradius is an integer (it is $5$), the area of the triangle is also an integer (it is $12$), and the coordinates of each side vector have even sum.

\begin{comment}
An example of a lattice triangle whose circumcenter and orthocenter are lattice points, but whose centroid is not a lattice point,
is given by the lattice triangle with vertices $O,(10,0),(2,4)$. Its centroid is $(4,\frac43)$, while its circumcenter is $(5,0)$ and its orthocenter is the vertex $v_2=(2,4)$; see Figure~\ref{F:badcent}.

\begin{figure}
\begin{center}
\begin{tikzpicture}[x=.5cm,y=.5cm]
\draw [fill, blue!20] (0.,0.) -- (10.,0.) -- (2.,4.) -- cycle;
\draw [blue] (0.,0.) -- (10.,0.) -- (2.,4.) -- cycle;

\draw [black] (5.,0.) circle (5.cm);

\draw [-,blue,thick,domain=1.2:8.8] plot(\x,{(-20.--4.*\x)/-3.});

\draw [fill=blue] (0.,0.) circle (2pt);
\draw[color=black] (-0.4,-0.4) node {$O$};
\draw [fill=black] (10.,0.) circle (2pt);
\draw[color=black] (10.5,0.4) node {$v_1$};
\draw [fill=black] (2.,4.) circle (2pt);
\draw[color=black] (1.37,4.09) node {$v_2$};
\draw [fill=blue] (4.,1.3) circle (2pt);
\draw[color=black] (4.5,1.6) node {$G$};
\draw [fill=black] (5.,0.) circle (2pt);
\draw[color=black] (5.4,0.5) node {$F$};
\draw [fill=blue] (2.,4.) circle (2pt);
\draw[color=black] (2.9,4.05) node {$H$};
 
\foreach \ii in {-1,...,11}
\draw[-,dotted] (\ii,-5) -- (\ii,5);
\foreach \jj in {-5,...,5}
\draw[-,dotted] (-1,\jj) -- (11,\jj);

\draw[->] (-1,0) -- (11,0);
\draw[->] (0,-5) -- (0,5);

 \end{tikzpicture}
\caption{A lattice triangle whose circumcenter and orthocenter are lattice points, but whose centroid is not.}\label{F:badcent}
\end{center}
\end{figure}
\end{comment}

\begin{figure}
\begin{center}
\begin{tikzpicture}[x=.5cm,y=.5cm]
\draw [fill, blue!20] (0.,0.) -- (6.,0.) -- (8.,4.)  -- cycle;
\draw [blue,semithick] (0.,0.) -- (6.,0.) -- (8.,4.)  -- cycle;

\draw [black,semithick] (3.,4.) circle (2.5cm);

\draw [-,blue,thick,domain=-.75:8.6] plot(\x,{(44.-8.*\x)/5.});

\draw [fill=black] (0.,0.) circle (2pt);
\draw[color=black] (-0.5,-0.5) node {$O$};
\draw [fill=black] (6.,0.) circle (2pt);
\draw[color=black] (6.5,-0.5) node {$v_1$};
\draw [fill=black] (8.,4.) circle (2pt);
\draw[color=black] (8.5,3.5) node {$v_2$};
\draw [fill=black] (4.667,1.33) circle (2pt);
\draw[color=black] (5.118172505383182,1.6) node {$G$};
\draw [fill=black] (3.,4.) circle (2pt);
\draw[color=black] (3.4,4.4) node {$F$};
\draw [fill=black] (8.,-4.) circle (2pt);
\draw[color=black] (8.4,-3.6) node {$H$};
 
\foreach \ii in {-3,...,9}
\draw[-,dotted,semithick] (\ii,-5) -- (\ii,10);
\foreach \jj in {-5,...,10}
\draw[-,dotted,semithick] (-3,\jj) -- (9,\jj);

\draw[->,semithick] (-3,0) -- (9,0);
\draw[->,semithick] (0,-5) -- (0,10);

\end{tikzpicture}
\caption{A lattice triangle whose circumcenter $F$ and orthocenter $H$ are lattice points, but whose centroid $G$ is not.}\label{F:badcent}
\end{center}
\end{figure}
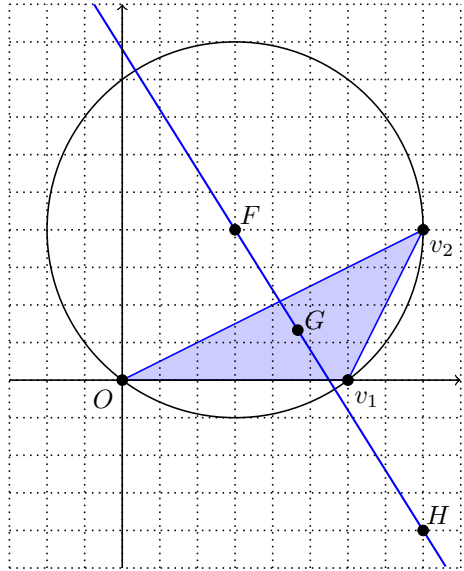

The converse to the above theorem is false. Indeed, consider the lattice triangle $T$ with vertices $O,(2,0),(1,3)$. Its centroid is the lattice point $(1,1)$. The coordinates of each side vector have even sum (and the area of $T$ is an integer),
however, the circumcenter of $T$ is $(1,\frac43)$, which is not a lattice point. 

An example of a lattice triangle whose centroid and orthocenter are lattice points, but whose circumcenter is not a lattice point,
is given by the lattice triangle $T$ with vertices $O,(3,3),(3,18)$. The circumcenter of $T$ is $(-\frac{15}2,\frac{21}2)$, while its orthocenter is $(21,0)$.
The circumcenter of  lattice triangle may fail to be a lattice point, even when the centroid and orthocenter are lattice points, the coordinates of each side vector have even sum, and the area of $T$ is an integer. An example is the lattice triangle $T$ with vertices $O,(12,6),(12,18)$. 
Its circumcenter of $T$ is $(\frac32,12)$, while its orthocenter is $(21,0)$.

Notice also that for the triangle of Figure~\ref{F:euler2}, the circumcenter, centroid and orthocenter are all lattice point,  the area of the triangle is an integer,
 the coordinates of each side vector  have even sum, but the circumradius is not an integer; it is $3\sqrt{13}$. 

In the above discussion, we have considered six conditions: the circumcenter $F$ is a lattice point, the centroid $G$ is a lattice point, the orthocenter $H$ is a lattice point, the circumradius is an integer, the area of the triangle is an integer, and the coordinates of each side vector  have even sum.  We have not made a systematic study of which subsets of the set of these six conditions are implied by which other subsets. Perhaps this would be a nice student project. In any case, there is one more implication that we would like to highlight.

\begin{theorem}
If for a lattice triangle $T$,  the orthocenter $H$ is a lattice point, and the circumradius is an integer, then the circumcenter $F$ is a lattice point.  \end{theorem}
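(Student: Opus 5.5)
The plan is to combine the Euler-line relation $H=3G-2F$ with the fact that the circumradius is the distance from $F$ to a vertex, and then argue modulo $4$. Squares are far more restricted mod $4$ than mod $2$, which is what will force the coordinates of $F$ to be integers.

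First normalise as in the rest of the paper. Translating by a lattice vector preserves the hypotheses (lattice orthocenter, integer circumradius) and the conclusion, so we may assume the vertices are $O, v_1, v_2$ with $v_1, v_2 \in \mathbb Z^2$. Then $3G = v_1+v_2$ and $H = 3G - 2F$, so
\[
2F = v_1 + v_2 - H .
\]
The right-hand side is a lattice point because $H$ is a lattice point by hypothesis. Hence $F = \left(\tfrac a2,\tfrac b2\right)$ for some integers $a,b$.

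Next use the circumradius. Since $O$ is a vertex of $T$, the circumradius $R$ equals $|F|$. So
\[
R^2 = \frac{a^2+b^2}{4},
\]
and because $R$ is an integer this gives $a^2 + b^2 \equiv 0 \pmod 4$.

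The final step is the only real content, and it is a parity argument. Every square is congruent to $0$ or $1$ modulo $4$, according as the number is even or odd. So $a^2+b^2 \bmod 4$ lies in $\{0,1,2\}$, and it equals $0$ only when $a$ and $b$ are both even. Therefore $a$ and $b$ are even, so $F=(a/2,b/2)\in\mathbb Z^2$.

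There is no serious obstacle here. The one point to check is that the hypotheses genuinely give $2F\in\mathbb Z^2$, which holds because both $3G$ and $H$ are lattice points, and that we use $R^2\in\mathbb Z$ (indeed a perfect square) rather than merely $R^2\in\tfrac14\mathbb Z$. Only $R^2\in\mathbb Z$ is needed, so the argument in fact proves the slightly stronger statement with ``the circumradius is an integer'' weakened to ``the square of the circumradius is an integer''.
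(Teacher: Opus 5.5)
Your proof is correct and is the paper's argument: translate a vertex to the origin, use $2F = 3G - H$ to get $F = (a,b)/2$ with $a,b\in\mathbb Z$, and note that $\frac12\sqrt{a^2+b^2}$ is an integer only if $a$ and $b$ are both even, which you justify explicitly via squares modulo $4$. Your closing remark that the hypothesis can be weakened to $R^2\in\mathbb Z$ is also valid, since the mod-$4$ step uses nothing more.
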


\begin{proof} 
First translate $T$ so that one of its vertices is moved to the origin. Because of the equation $ F=(3G-H)/2$,  if $H$ is a lattice point, then $F$ has the form $(a,b)/2$ where $a,b\in \Z$. Then,  since the origin is a vertex, the circumradius is $\frac12 \sqrt{a^2+b^2}$. But this can only be an integer if $a$ and $b$ are both even. Hence $F$ is a lattice point. 
\end{proof}


\bibliographystyle{vancouver}

\end{document}